\newtheorem*{theorem*}{Theorem}
\newtheorem{theorem}{Theorem}
\newtheorem{corollary}[theorem]{Corollary}
\newtheorem{definition}{Definition}
\newtheorem{lemma}{Lemma}
\newtheorem*{remark*}{Remark}
\newcommand{\arrow}[3]{\ensuremath{#1\colon#2\rightarrow#3}} 
\newcommand{\marrow}[3]{\ensuremath{#1\colon#2\rightarrowtail#3}}
\newcommand{\clarrow}[2]{\ensuremath{#1\rightarrow#2}}
\newcommand{\clmarrow}[2]{\ensuremath{#1\rightarrowtail#2}}
\newcommand{\cprod}[2]{\ensuremath{#1\times #2}} 
\newcommand{\cpull}[3]{\ensuremath{#1\times_{#2} #3}} 
\newcommand{\demph}[1]{\textbf{#1}} 
\mathchardef\mhyphen="2D
\title{A univalent universe in finite order arithmetic}
\author{Colin McLarty}
\begin{document}

\begin{abstract}Homotopy Type Theory with a univalent universe $\,\mathcal{U}_0$ is interpreted at the strength of finite order arithmetic.  We eliminate Grothendieck universes, avoid the axiom of replacement, and bound all uses of separation.
\end{abstract}

\maketitle

Kapulkin, Lumsdaine, and Voevodsky \citeyearpar[Thm.~3.4.3]{KapuLenFanuVoev} show the Univalence Axiom is consistent with Martin-L\"of type
theory assuming the existence of two inaccessible cardinals in ZFC.  We remove the inaccessibles by using a definability criterion for \emph{smallness} rather than cardinality.

More fully:  Section~\ref{S:finiteorder} interprets a univalent universe $\,\mathcal{U}_0$ at the logical strength of finite order arithmetic, using \emph{Mac~Lane Type Theory} (MTT)\@.  The $\,\mathcal{U}_0$-small types admit all the constructors $\Pi\mhyphen,\Sigma\mhyphen,\mathrm{Id}\mhyphen,\mathrm{W}\mhyphen,\mathbf{1},\mathbf{0},$ and $+$ of Martin-L\"of type theory.  Non-small types in this interpretation admit all but $\Pi\mhyphen$.  In general MTT is a conservative third order extension of set theory~\citep{McLArXivLarge}.  The version here is conservative over the set theory \textsf{MAC}+V=L introduced by \citet{MathThe}.  The set axioms are ZFC, minus replacement, with bounded separation, transitive containment and V=L\@.  Regularity and transitive containment give a good theory of rank while V=L gives a definable well ordering of all sets. This MTT, like \textsf{MAC} itself, is equiconsistent with finite order arithmetic.

As a quick guide to the limitations of MTT note the set theoretic axioms have a natural model in $L_{\omega+\omega}$.  So no set can be proved to exist with rank infinite over $\mathbb{N}$.  Even the defining conditions for sets must be bounded.  MTT cannot prove there exists a set of natural numbers
  \[\{n\in\mathbb{N}|\ \text{ there are } n \text{ successive power sets beginning with } \mathbb{N} \}.\]

\subsection{A concern} Steve Awodey at the ASL in Philadelphia noted that version did  not address coherence.  I believed use of a (not-necessarily-univalent) universe following \cite[\S 1]{KapuLenFanuVoev} would carry over routinely.  I now think Steve is right this belief needs more careful verification.  If it does not work, then the best option seems to be extending the method of our new Section~\ref{SS:coherence} to classes by defining a well ordering of classes using a higher consistency strength than finite order arithmetic, namely second order Zermelo set theory.  The method would combine results by Cohen, Felgner and Mostowski with the assumption V=L for sets (see \citet{SimpsonRevFelgner}, thanks to Victoria Gitman for pointing out these results).  I have not yet worked out the details either way.

\section{Quillen model structure on simplicial sets in \textsf{MAC}}
\subsection{Working with bounded separation}\label{SS:separation}
A few germane examples contrast bounded separation to the axiom of replacement.

Define objects of the simplex category $\Delta$ as initial segments $[n]=\{0,\dots,n\}$ of $\mathbb{N}$, and $\mathrm{Hom}_{\Delta}([n],[m])$ as the set of order preserving functions \clarrow{[n]}{[m]}.  Countable replacement could collect the $[n]$ into the object set $\Delta_0=\{[n]\,|\ n\in \mathbb{N}\,\}$, and collect the  $\mathrm{Hom}_{\Delta}([n],[m])$ into a set whose union is the arrow set $\Delta_1$.  But the definitions only refer to elements and subsets of $\mathbb{N}$ and $\cprod{\mathbb{N}}{\mathbb{N}}$\@.  So $\Delta_0$ can be defined by bounded separation instead of replacement:
  \[ \Delta_0=\{x\in \mathcal{P}(\mathbb{N})\,|\ \exists n\in \mathbb{N}\, \forall y\in\mathbb{N}\,(y\in x \leftrightarrow y<n)\ \}\]
And separation defines $\Delta_1\subseteq \mathcal{PP}(\cprod{\mathbb{N}}{\mathbb{N}})$ with equally natural quantifier bounds.

And we cannot define a presheaf $\mathcal{F}$ on small category $\mathcal{C}$ as a set $\mathcal{F}A$ for each $A\in \mathcal{C}_0$ and suitable functions \arrow{{\mathcal{F}f}}{\mathcal{F}B}{\mathcal{F}A} for $\mathcal{C}$ arrows \arrow{f}{A}{B}. In the absence of replacement, this does not imply any set bounds the values $\mathcal{F}A$.

So define a presheaf $\mathcal{F}$ on small category $\mathcal{C}$ as a function \arrow{\mathcal{F}}{F_0}{\mathcal{C}_0}, and a function \arrow{\gamma_{\mathcal{F}}}{\cprod{\mathcal{C}_1}{F_0}}{F_0} meeting appropriate conditions.  Then $\mathcal{F}(A)$ for $A\in \mathcal{C}_0$ is the set $\mathcal{F}^{-1}(A)\subseteq F_0$ so $F_0$ bounds the values while $\gamma_{\mathcal{F}}$ combines all the functions \arrow{{\mathcal{F}f}}{\mathcal{F}B}{\mathcal{F}A}.  The full definition of this \emph{Grothendieck construction} is in many references including \cite[Section~3.2]{McLArXivLarge}.

A \emph{simplicial set} is a presheaf on the simplex category $\Delta$. So it is a pair of functions \arrow{X}{X_0}{\Delta_0} and \arrow{\gamma_{X}}{\cprod{\Delta_1}{X_0}}{{X_0}} meeting the presheaf conditions. Define the set $X[n]$ of $n$-simplices in a simplicial set $X$ as the preimage $X[n]=X^{-1}([n])$ of object $[n]\in\Delta_0$ along the function  \arrow{X}{X_0}{\Delta_0}.

A \emph{simplicial set map} \arrow{f}{Y}{X} is a natural transformation of presheaves.  By Yoneda $X[n]$ amounts to the set of simplicial maps \clarrow{\Delta[n]}{X} where $\Delta[n]$ is the standard $n$-simplex, formally the represented presheaf  $\mathrm{Hom}_{\Delta}(\_,[n])$.

\subsection{Coherence for sets}\label{SS:coherence}
The \emph{coherence problem} for categorical logic is ``the requirement for pullback to be strictly functorial, and for the logical structure[s] to commute strictly with it''~\cite[\S 1.3]{KapuLenFanuVoev}.  Global choice can make pullback strictly functorial in one argument:

\begin{definition}For any functions \arrow{f}{A}{B} and \arrow{h}{C}{B} the \demph{selected pullback} $f^*(h)$ is the smallest function in the global well ordering which makes a pullback.
\end{definition}

\[\xymatrix{f^*(C) \ar[r] \ar[d]_{f^*(h)} & C \ar[d]^h \\  A \ar[r]_f & B}\]

\begin{lemma} Selected pullbacks are strictly functorial. So $g^*f^*(h)=(fg)^*(h)$ here:
\[\xymatrix{\bullet \ar[r] \ar[d]  & f^*(C) \ar[r] \ar[d]_{f^*(h)} & C \ar[d]^h \\  D \ar[r]_g & A \ar[r]_f & B}\]
\end{lemma}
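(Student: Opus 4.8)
The plan is to reduce strict functoriality to the \emph{pasting lemma} for pullback squares together with the antisymmetry of the global well ordering. Write the three given maps as \arrow{h}{C}{B}, \arrow{f}{A}{B}, and \arrow{g}{D}{A}, so that $fg\colon D\to B$ is the bottom composite in the displayed diagram. I must compare two functions with codomain $D$: the iterated selected pullback $g^*f^*(h)$, and the single selected pullback $(fg)^*(h)$. Both arise as the left vertical leg of a pullback square, and each is minimal, in the global well ordering, among all functions that can occur as such a leg --- the first over $g$ relative to $f^*(h)$, the second over $fg$ relative to $h$. Since the well ordering is a linear order, it suffices to show that each of these two functions is a \emph{candidate} for the minimality property defining the other; minimality then forces $g^*f^*(h)\le(fg)^*(h)$ and $(fg)^*(h)\le g^*f^*(h)$, hence equality.

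First I would run the pasting lemma in its forward direction. The right-hand square is the selected pullback of $h$ along $f$, and the left-hand square is the selected pullback of $f^*(h)$ along $g$; both are pullbacks by definition. The standard fact that a rectangle built from two adjacent pullback squares is again a pullback shows the outer rectangle is a pullback of $h$ along the composite $fg$. Thus $g^*f^*(h)$, equipped with the composite top edge into $C$, is the left leg of a pullback square over $fg$, so it competes in the minimization defining $(fg)^*(h)$; minimality gives $(fg)^*(h)\le g^*f^*(h)$.

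For the reverse inequality I would use the pasting lemma in its converse form: if the right square is a pullback and the outer rectangle is a pullback, then the induced left square is a pullback. Starting from the selected pullback of $h$ along $f$ (the right square) and the selected pullback of $h$ along $fg$ (the outer rectangle), the universal property of the right square produces a comparison map $(fg)^*(C)\to f^*(C)$, and the converse pasting lemma certifies that the resulting left square is a pullback of $f^*(h)$ along $g$. Hence $(fg)^*(h)$ competes in the minimization defining $g^*f^*(h)$, giving $g^*f^*(h)\le(fg)^*(h)$. Antisymmetry of the well ordering then closes the argument.

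I expect the main obstacle to be bookkeeping around the meaning of ``the smallest function which makes a pullback.'' A bare function does not record its companion projection to $C$, so I must be explicit that ``makes a pullback'' is the property of being the left leg of \emph{some} pullback cone, and I must confirm that in each application the relevant cone data --- the top edges, and the induced comparison map in the reverse step --- is present and compatible. The forward pasting is routine; the delicate point is the reverse step, where one must verify that the comparison map $(fg)^*(C)\to f^*(C)$ furnished by the universal property really makes the left square commute and be universal, so that $(fg)^*(h)$ genuinely competes in the minimization over $g$. Once that is in place, linearity and antisymmetry of the $V=L$ well ordering do the rest, and no choice beyond the definable global well ordering is needed.
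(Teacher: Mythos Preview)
Your argument is correct and is exactly the approach the paper takes: the paper's entire proof is the one-line invocation of the pasting lemma (``the left hand square is a pullback iff the outer rectangle is''), which is precisely what you unpack into the two inequalities $(fg)^*(h)\le g^*f^*(h)$ and $g^*f^*(h)\le(fg)^*(h)$ via forward and converse pasting plus antisymmetry of the global well ordering. Your added care about what ``makes a pullback'' means for a bare function, and about producing the comparison map into $f^*(C)$ in the reverse step, fills in details the paper leaves implicit but does not change the strategy.
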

\begin{proof} The familiar fact that when the right hand square is a pullback, then the left hand square is a pullback iff the outer rectangle is.
\end{proof}

Categorical logic  makes asymmetric use of pullbacks.  The function $h$ is regarded as some logical construct over some base $B$, while $f$ is a re-parametrization or change of base from $B$ to $A$.  Pullback is required to be strictly functorial in change of base, and this definition makes it so.

And this gives a routine for making all logical structures commute strictly with pullback.  We convene that each logical construction, which is prima facie defined up to isomorphism over the base, will be specified to be the unique smallest function to the base in that isomorphism class.  Trivially, pullback along base changes is strictly functorial for these specified representatives.

For example given functions \arrow{k}{K}{A} and \arrow{f}{A}{B} the dependent sum function \arrow{\Sigma(k)}{\Sigma(K)}{B} is commonly defined up to isomorphism by a universal property making $\Sigma$ left adjoint to pullback along $f$. And it is commonly specified to be just the postcomposite with $f$ so $\Sigma(K)=K$ and $\Sigma(k)=fk$.  But our convention will rather specify  $\Sigma(k)$ as that function to $B$ that comes earliest in the global well order among all functions isomorphic over $B$ to $fk$, and  $\Sigma(K)$ to be whatever set is the domain of that function.

We define selected pullbacks, and logical constructors, for simplicial sets the same way.  So we have coherence for type theory on simplicial sets.

\subsection{W-types}\label{SS:wtypes}

A W-type in sets is a set generated by certain elements and operators, subject to no equations.  For example the set of natural numbers is generated by $0$ with a unary successor operator $s$, so $\mathbb{N}=\{0,s0,ss0\dots\}$.  Constants are treated as 0-ary operators.  A set of operators, each with an arity, is called a \demph{signature}.  Since there are no equations an operator has no significant properties but its arity, and in this strict sense the signature of the natural numbers as W-type is just one 0-ary operator and one unary.

It is often useful to treat a W-type as an infinite colimit and that could seem to use replacement.  But each W-type can also be realized by a suitable set of finite sequences on a given alphabet defined by the operators and their arities \citep[p.~3]{vandenBergMoerdijk}.  Since \textsf{MAC} provides for every set $S$ a set $S^{\mathbb{N}}$ of all infinite sequences of elements of $S$, that approach works in \textsf{MAC} to prove every signature has a W-type.  Section~\ref{S:classes} treats W-types in the class theory MTT.

\subsection{Fibrations}
The chief logical issue in verifying the Quillen model structure on simplicial sets is existence of cofibration-fibration factorizations.

\begin{theorem}\label{T:fibration} \emph{(Provably, in \textsf{MAC})} Any simplicial map \arrow{f}{X}{Y} can be factored into an acyclic monic \marrow{j}{X}{Z} and fibration \arrow{p}{Z}{Y}, or into a monic \clmarrow{X}{Z} and an acyclic fibration \clarrow{Z}{Y}.
\end{theorem}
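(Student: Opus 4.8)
The plan is to prove both factorizations by Quillen's small object argument, specialized to the two generating sets for the standard model structure: the horn inclusions $\mathcal{H}=\{\,\Lambda^n_k\hookrightarrow\Delta[n]\mid 0\le k\le n,\ n\ge 1\,\}$ for the (acyclic monic, fibration) factorization, and the boundary inclusions $\mathcal{B}=\{\,\partial\Delta[n]\hookrightarrow\Delta[n]\mid n\ge 0\,\}$ for the (monic, acyclic fibration) factorization. In each case I would factor \arrow{f}{X}{Y} as \clarrow{X}{Z} followed by \clarrow{Z}{Y}, where the first map is a relative cell complex built by attaching the codomains of the generating maps along their domains, and the second has the right lifting property against the chosen generating set. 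The standard identifications then deliver exactly the two asserted factorizations: a map has the right lifting property against all horns iff it is a Kan fibration, and against all boundaries iff it is an acyclic fibration, while relative horn-cell complexes are anodyne hence acyclic monics, and relative boundary-cell complexes are monic. Since the theorem claims only existence of the factorizations, none of the coherence apparatus of \S\ref{SS:coherence} is needed here.

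First I would set up one stage. Given \arrow{g}{W}{Y}, let $S$ be the set of all commuting squares from a generating map \clmarrow{A}{B} to $g$, that is, pairs \clarrow{A}{W} and \clarrow{B}{Y} making the evident square commute. Each domain $A$ ($=\Lambda^n_k$ or $\partial\Delta[n]$) is a \emph{finite} simplicial set, so a map out of it is determined by finitely much data drawn from the underlying sets of $W$ and $Y$; hence $S$ is a subset of a finite product of hom-sets and is cut out by bounded separation. The next stage is the pushout of \clarrow{\coprod_S A}{W} along \clarrow{\coprod_S A}{\coprod_S B}, with its induced map to $Y$. Iterating from $W=X$ produces a chain $X=Z_0\to Z_1\to Z_2\to\cdots$, and $Z$ is its colimit, with $p\colon Z\to Y$ induced and $j\colon X\to Z$ the inclusion of the initial stage. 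The point that lets $\omega$ iterations suffice --- and so avoids the transfinite induction up to a large ordinal of the general small object argument, and with it any appeal to replacement --- is compactness of the generating domains: because each $A$ has only finitely many simplices, any map \clarrow{A}{Z} into the colimit factors through some finite stage $Z_m$; a lifting problem against a generating map therefore lives at a finite stage, where the construction has already adjoined a filler at stage $m+1$. This gives the right lifting property of $p$. That $j$ is anodyne (resp. monic) follows because it is a countable composite of pushouts of maps in $\mathcal{H}$ (resp. $\mathcal{B}$), and these classes are closed under pushout and countable composition.

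The main obstacle is purely set-theoretic: carrying out this $\omega$-indexed colimit within \textsf{MAC}, where replacement is unavailable and no set of rank infinite over the base may be assumed. Forming the colimit naively as a union of the collected family $\{Z_n\mid n\in\mathbb{N}\}$ is suspect on two counts --- collecting the family looks like replacement, and the repeated pushouts (tagging for coproducts, equivalence classes for quotients) threaten to raise rank by a fixed amount at each stage, pushing the colimit to rank $\omega$ over $X$ and $Y$. I would avoid both by realizing $Z$ directly, in the style of the W-type construction of \S\ref{SS:wtypes}, rather than iterating quotients: assign each simplex of $Z$ a canonical finite code recording its birth stage together with its attaching data (an element of $S$ at that stage and a simplex of the relevant $\Delta[n]$), bottoming out at simplices of $X$. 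These codes are finite sequences over a single alphabet $A$ assembled by bounded separation from the underlying sets of $X$ and $Y$, from $\mathbb{N}$, and from the finite combinatorics of $\Delta$. Since \textsf{MAC} provides $A^{\mathbb{N}}$ for any set $A$, bounded separation then carves the underlying set of $Z$, together with its face and degeneracy operators and its map to $Y$, out of $A^{\mathbb{N}}$ at a rank only finitely above that of $X$ and $Y$. Verifying that this coded presentation reproduces the colimit --- and in particular that the compactness argument for $p$ survives translation into codes --- is where the real work lies; the homotopy-theoretic identifications above, being statements about finite simplicial sets and lifting, then go through unchanged in \textsf{MAC}.
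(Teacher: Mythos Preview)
Your proposal is correct and follows essentially the same route as the paper: run Quillen's small object argument for the horn (resp.\ boundary) inclusions, observe that the obstacle in \textsf{MAC} is collecting the stages $Z^i$ without replacement, and resolve it by producing a single bounding set of ``names'' for all potential new simplices via the W-type/finite-sequence machinery of \S\ref{SS:wtypes}, inside which the tower is then defined by bounded separation. The paper is slightly more concrete about the bounding set --- it takes the W-type on the signature with one constant per simplex of $X$ and two $n$-ary operators $\phi_n,\kappa_n$ for each $n$ (the filler and its missing face) --- whereas you describe the codes more abstractly, but the idea is the same.
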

\begin{proof}The two claims are proved the same way.  We handle the first since it is used for homotopy identity types. Given  \arrow{f}{X}{Y} Quillen first adds simplices to $X$ to form a simplicial set $Z^0$, then adds simplices to that to form $Z^1$, and so on through an infinite series.  The colimit of all the $Z^i$ is the desired $Z$.  Replacement would allow us to form each $Z^i$ successively and then collect them into a colimit $Z$.  To do it in $\mathsf{MAC}$, though, using bounded separation and not replacement, we must use the resources of $\mathsf{MAC}$ to find a single set with enough structure to supply simplices for all the $Z^i$ before we actually find the $Z^i$.

We can choose as bounding set the W-type of the signature with one constant for each simplex of $X$ and two $n$-ary operators $\phi_n$ and $\kappa_n$ for each $n\in\mathbb{N}$.  Most elements of this W-type will not be used to construct the $Z^i$.  But we identify the constants with the corresponding simplices of $X$.  And when $\sigma_1,\dots,\sigma_n$ is any list of simplices forming an $n-1$-dimensional horn in $X$ they we treat the value $\phi(\sigma_1\dots\sigma_n)$ as an $n$-simplex filling that horn in $Z^0$; and treat $\kappa(\sigma_1\dots\sigma_n)$ as the last face of that new simplex in $Z^0$.  The infinite series $Z^0,Z^1,Z^2 \dots$ is defined inductively with all quantifiers bounded by the W-type.
\end{proof}

\subsection{Homotopy Type Theory}
Set theory \textsf{MAC} proves presheaves on any small category form a topos~\citep{McLArXivLarge} and so admit type theoretic constructors $\Pi\mhyphen,\Sigma\mhyphen,\mathbf{1},\mathbf{0}$, and~$+$; and following \citep{vandenBergMoerdijk} they have all W-types.  So \textsf{MAC} proves simplicial sets admit all these constructors.

\citet[Thm.~2.3.4]{KapuLenFanuVoev} show those simplicial set constructors preserve fibrations, and refer to \citet{AwodeyWarren} showing fibrations admit a homotopy-suitable identity type constructor $\mathrm{Id}\mhyphen$.  Given our Theorem~\ref{T:fibration} those proofs use no replacement and only bounded separation, so:

 \begin{theorem}\emph{\textsf{MAC}} proves fibrations as types over simplicial sets interpret Homotopy Type Theory with constructors $\Pi\mhyphen,\Sigma\mhyphen,\mathrm{Id}\mhyphen, \mathrm{W}\mhyphen, \mathbf{1},\mathbf{0}, +$.
 \end{theorem}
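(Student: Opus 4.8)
The plan is to assemble the theorem from the pieces already established, checking at each stage that no step exceeds the resources of \textsf{MAC}. The statement asserts that fibrations, viewed as dependent types over simplicial sets, model Homotopy Type Theory with the full battery of constructors. The key observation driving the argument is that all the heavy homotopy-theoretic work has been localized into Theorem~\ref{T:fibration}, so what remains is to trace the external references and confirm they import cleanly.

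First I would recall that \textsf{MAC} proves presheaves on any small category form a topos, and that the simplex category $\Delta$ is small in the definable sense made precise in Section~\ref{SS:separation}. This yields immediately the constructors $\Pi\mhyphen,\Sigma\mhyphen,\mathbf{1},\mathbf{0},+$ on simplicial sets as a topos, and the treatment of W-types in Section~\ref{SS:wtypes} supplies $\mathrm{W}\mhyphen$ via the finite-sequence realization rather than an infinite colimit, so that this too stays within bounded separation. The second step is to invoke \citet[Thm.~2.3.4]{KapuLenFanuVoev}, which shows these constructors preserve fibrations; here I would verify that their argument is elementary in the relevant sense, namely that it quantifies only over already-constructed simplicial sets and their maps and nowhere collects an unbounded family. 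The third step is the identity-type constructor: by \citet{AwodeyWarren}, fibrations admit a homotopy-suitable $\mathrm{Id}\mhyphen$, and the crucial input to that construction is precisely a cofibration-fibration factorization, which Theorem~\ref{T:fibration} has now provided inside \textsf{MAC} using the W-type bounding trick in place of replacement.

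The main obstacle I anticipate is not in the cited homotopy theory itself but in the bookkeeping of smallness and boundedness: I must check that each constructor, when specified via the selected-representative convention of Section~\ref{SS:coherence}, produces a fibration that is again a genuine simplicial-set map in the Grothendieck-construction sense, with its values bounded by a single set. The adjunctions defining $\Pi\mhyphen$ and $\Sigma\mhyphen$ are the delicate cases, since a careless formulation of the dependent product would range over all fibrations over a base and thereby smuggle in replacement. I would address this by insisting throughout that every construction is carried out relative to an ambient presheaf whose existence Section~\ref{SS:separation} already guarantees, so that the quantifiers defining each constructor are bounded by that ambient object.

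Once these checks are in place, the conclusion follows by assembling the constructors: the topos structure and W-types give $\Pi\mhyphen,\Sigma\mhyphen,\mathrm{W}\mhyphen,\mathbf{1},\mathbf{0},+$, preservation of fibrations keeps each constructor within the class of types, Theorem~\ref{T:fibration} feeds $\mathrm{Id}\mhyphen$ through Awodey--Warren, and the coherence convention of Section~\ref{SS:coherence} makes pullback along base change strictly functorial, so the resulting interpretation satisfies the substitution laws required of a model of Homotopy Type Theory. Since every step has been exhibited to use only bounded separation and no replacement, the entire interpretation is carried out provably in \textsf{MAC}, which is what the theorem asserts.
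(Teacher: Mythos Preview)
Your proposal is correct and follows essentially the same route as the paper: the topos structure of presheaves on $\Delta$ (plus the W-type treatment) supplies $\Pi\mhyphen,\Sigma\mhyphen,\mathrm{W}\mhyphen,\mathbf{1},\mathbf{0},+$; \citet[Thm.~2.3.4]{KapuLenFanuVoev} shows these preserve fibrations; and Theorem~\ref{T:fibration} feeds the Awodey--Warren construction of $\mathrm{Id}\mhyphen$, with the observation that none of this exceeds bounded separation. Your additional remarks on boundedness bookkeeping and the coherence convention are reasonable elaborations but not a different strategy.
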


It remains to get these constructors for simplicial proper-classes, where dependent types have set-sized fibers.

\section{Classes and collections}\label{S:classes}

\emph{Mac~Lane Type Theory} (MTT) takes sets, classes, and collections as types. See~\citet{McLArXivLarge}. Keeping $\in$ for set membership, write $A\in^1 \mathcal{A}$ to say set $A$ is in class $\mathcal{A}$, and $\mathcal{A}\in^2 \mathfrak{B}$ for class $\mathcal{A}$ in collection $\mathfrak{B}$.  MTT defines classes and collections by \emph{set theoretic formulas}, that is formulas quantifying only over sets.  When MTT assumes the axioms \textsf{MAC}+V=L for sets it is conservative over \textsf{MAC}+V=L, as G\"odel-Bernays set theory is over ZFC\@.

For example, class inclusion, cartesian product of classes, and the collection of all functions between classes $\mathcal{A,B}$\, are defined by set theoretic formulas:
\begin{center}
    $\mathcal{A}\subseteq^1\mathcal{B}\quad \leftrightarrow\quad \forall x\, (x\in^1\mathcal{A} \rightarrow x\in^1\mathcal{B}\,)$\\
    $\cprod{\mathcal{A}}{\mathcal{B}}\quad =\quad \{z\ |\ \exists\,x,y\ (z=\langle x,y\rangle\ \land\ x\in^1\mathcal{A}\ \land\ y\in^1 \mathcal{B}\,)\, \}^1$ \\
    $\mathcal{B^A}\ =\ \{\mathcal{F}\ |\ \mathcal{F}\subseteq^1 \cprod{\mathcal{A}}{\mathcal{B}}\
        \land\  (\forall x\in^1\mathcal{A})(\exists! y\in^1\mathcal{B})\ \langle x,y\rangle \in^1 \mathcal{F}   \}^2$
\end{center}

\noindent And the well ordering on sets given by V=L is defined by a set theoretic formula. All these concepts can be used to define classes and collections in MTT\@.

The theory MTT has tuple types.  There is a type of pairs $\langle\mathcal{A,B}\rangle^1$ of classes, and one of triples of classes $\langle\mathcal{A,B,C}\rangle^1$, and one of pairs $\langle\mathcal{A},\mathfrak{B}\rangle^2$ of one class and one collection, etc.  The superscript indicates the maximal type of an entry in the tuple.  There are collections of $n$-tuples of classes for any fixed $n$.

It is clear how to define the class category $\mathscr{S}\mathrm{et}=\langle\mathscr{S}\mathrm{et}_0,\mathscr{S}\mathrm{et}_1\rangle^1$ with the classes $\mathscr{S}\mathrm{et}_0$ of all sets and $\mathscr{S}\mathrm{et}_1$ of all functions.  Let $\mathfrak{Class}_0$ be the collection of all classes.  Code a class function \arrow{f}{\mathcal{A}}{\mathcal{B}} as a triple $\langle\mathcal{A},\Gamma_f,\mathcal{B}\rangle$ with $\Gamma_f\subseteq\cprod{\mathcal{A}}{\mathcal{B}}$ such that:
   \[\forall x\in^1 \mathcal{A}\ \exists_!\,y\in^1 \mathcal{B}\ \langle x,y\rangle\in^1 \Gamma_f \]
This set theoretic formula defines a collection $\mathfrak{Class}_1$ of all class functions.  The chief point in showing $\langle\mathfrak{Class}_0,\mathfrak{Class}_1\rangle^2$ is a collection category of all classes, is to show function composition is comprised in one collection
   \[ \mathfrak{Class}_2\ =\ \{\langle\mathcal{F,G,H}\rangle^1\ |\ \mathcal{GF=H}\,\}^2\]
It is so, since the formula $\mathcal{GF=H}$ only quantifies over sets in $\mathcal{F,G,H}$.

\begin{definition}\label{D:powerclass} Call $A$ a \demph{subset of class} $\mathcal{A}$, written $A\subseteq^0 \mathcal{A}$, if they satisfy the set theoretic formula
       $\forall x (x\in A \rightarrow x\in^1\mathcal{A})$. So every class $\mathcal{B}$ has a class $\mathcal{S}ub\mathcal{S}et(\mathcal{B}) = \{\, A\,|\, A\subseteq^0 \mathcal{B}\,)\,\}^1$ of all its subsets.
\end{definition}

\begin{remark*}Crucially for our project, MTT does not prove every subclass of a set is a set.  That principle amounts to Zermelo's axiom of separation.  It proves the consistency of finite order arithmetic, and so of MTT.
\end{remark*}

Classes in MTT differ from sets not just in size, but in that sets require bounded definitions entirely in the language of \textsf{MAC}.  Classes only need set theoretic definitions allowing class and collection parameters.

\begin{theorem}\label{T:exactness} The category $\mathfrak{Class}$ has all finite limits and coproducts, and has quotients for equivalence relations defined by set theoretic formulas.
\end{theorem}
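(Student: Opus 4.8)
The plan is to realize each finite limit, each coproduct, and each quotient as a class carved out by a set theoretic formula whose parameters are the given classes and the graphs of the given class functions; since MTT admits classes defined by such formulas, each construction lands back in $\mathfrak{Class}$, and its universal property is then checked by the same elementwise reasoning used in $\mathscr{S}\mathrm{et}$.

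For finite limits it suffices to supply a terminal object, binary products, and equalizers. A terminal object is any one-point class, say $\mathbf{1}=\{\emptyset\}^1$. The product of classes $\mathcal{A},\mathcal{B}$ is the class $\cprod{\mathcal{A}}{\mathcal{B}}$ already defined above, equipped with the two coordinate projections, which are class functions; its universal property is immediate. For parallel class functions $f,g\colon\mathcal{A}\to\mathcal{B}$ with graphs $\Gamma_f,\Gamma_g$, I would take the equalizer to be
\[\mathcal{E}=\{\,x\mid x\in^1\mathcal{A}\ \land\ \exists y\,(\langle x,y\rangle\in^1\Gamma_f\ \land\ \langle x,y\rangle\in^1\Gamma_g)\,\}^1,\]
with its inclusion into $\mathcal{A}$. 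This is a set theoretic formula in the class parameters $\mathcal{A},\Gamma_f,\Gamma_g$, so $\mathcal{E}$ is a class, and any class function equalizing $f,g$ factors uniquely through it. Finite coproducts are just as direct: the empty class $\emptyset^1$ is initial, and the coproduct of $\mathcal{A},\mathcal{B}$ is the tagged union $\{\langle\emptyset,x\rangle\mid x\in^1\mathcal{A}\}^1\cup\{\langle\{\emptyset\},y\rangle\mid y\in^1\mathcal{B}\}^1$ with the evident injections, again a class and again with its universal property checked by cases on the tag. Note that none of these constructions asks a subclass of a set to be a set; each is merely a class defined by a formula, so the restriction on separation is not felt.

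The quotient is different, and I expect it to be the main obstacle. Its elements must be \emph{sets}, whereas an equivalence class of $\mathcal{R}$ can be a proper class, so the naive quotient by equivalence classes is unavailable. The remedy is the definable global well order $\prec$ furnished by $V=L$. For an equivalence relation $\mathcal{R}\subseteq^1\cprod{\mathcal{A}}{\mathcal{A}}$ given by a set theoretic formula I would define a representative-selecting function $r\colon\mathcal{A}\to\mathcal{A}$ by taking $r(x)$ to be the $\prec$-least $y$ with $\langle x,y\rangle\in^1\mathcal{R}$; its graph
\[\Gamma_r=\{\,\langle x,y\rangle\mid x\in^1\mathcal{A}\ \land\ \langle x,y\rangle\in^1\mathcal{R}\ \land\ \forall z\,(z\prec y\rightarrow\langle x,z\rangle\notin^1\mathcal{R})\,\}\]
is again cut out by a set theoretic formula, so if $r$ is total it is a class function. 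The quotient is then the image class $\mathcal{A}/\mathcal{R}=\{x\mid x\in^1\mathcal{A}\ \land\ r(x)=x\}^1$ of canonical representatives, with quotient map the corestriction of $r$; since $r$ is idempotent and constant on $\mathcal{R}$-classes, every class function out of $\mathcal{A}$ that is constant on $\mathcal{R}$-classes factors uniquely through it.

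The crux is therefore the \emph{totality} of $r$: that every nonempty $\mathcal{R}$-class has a $\prec$-least element. This cannot be obtained in the usual way, by intersecting the class with a $\prec$-initial segment and well ordering the resulting set, because by the Remark above that intersection is a subclass of a set and need not be a set---the defining formula of $\mathcal{R}$ may be unbounded, so bounded separation does not produce it. My plan is instead to derive totality from the well foundedness of $\prec$ as a \emph{class} relation, using the good theory of rank provided by regularity and transitive containment: since $x\in^1\mathcal{A}$ gives $\langle x,x\rangle\in^1\mathcal{R}$ by reflexivity, each $\mathcal{R}$-class is nonempty with $x$ a member, so its $\prec$-least element is sought $\prec$-below $x$; the interplay of $\prec$ with rank is what must be pushed to locate that least member while keeping every separation used in the argument bounded. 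Getting this last point to go through within the bounded-separation discipline of \textsf{MAC} is the delicate step on which the whole theorem turns.
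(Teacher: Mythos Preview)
Your approach matches the paper's almost exactly: finite limits via the usual products/equalizers (the paper says ``the usual products and pullbacks work''), coproducts via a tagged disjoint union, and quotients by selecting the $\prec$-least representative of each equivalence class using the global well ordering coming from $V{=}L$. The constructions you write out are the same ones the paper uses, only spelled out in more detail.

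The one substantive divergence is your final paragraph. You correctly isolate the only nontrivial point---that each $\mathcal{R}$-class must have a $\prec$-least member---and then worry that bounded separation blocks the obvious argument, leaving the step unresolved. The paper does not share this worry: it simply takes ``$V{=}L$ gives a definable well ordering of all sets'' to mean a \emph{class} well ordering, i.e.\ one for which every nonempty class has a least element, and uses that directly. So from the paper's standpoint your representative function $r$ is total by fiat of the assumed axiom, not by a further argument inside \textsf{MAC}; the delicate bounded-separation manoeuvre you sketch is not needed. Your instinct that this deserves scrutiny is reasonable---whether \textsf{MAC}${}+V{=}L$ really proves the class-level least-element schema is a fair question to ask of the background development---but it is upstream of this theorem, not a gap in the proof of it. Once the global well ordering is granted in that strong sense, your argument (and the paper's) is complete.
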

\begin{proof}The usual products and pullbacks work. Define disjoint unions by
    \[\mathcal{A+B}=\ \{\langle x,y\rangle |\ (x\in^1\mathcal{A}\ \land\ y=0) \vee (x\in^1\mathcal{B}\ \land\ y=1)   \}^1\]
For a quotient of any equivalence relation $\mathcal{E}\subseteq^1\cprod{\mathcal{A}}{\mathcal{A}}$ defined by a set theoretic formula take the class of all $y\in^1 \mathcal{A}$ minimal in the global well ordering of sets among all $x$ with $x\mathcal{E}y$.
\end{proof}

\begin{definition}A class function \arrow{f}{\mathcal{A}}{\mathcal{B}} is \demph{bounded} if every $A\subseteq^0 \mathcal{A}$ has a set image $f(A)\subseteq^0 \mathcal{B}$ with the graph of the restriction $f|_{A}$ a subset of $\cprod{A}{f(A)}$ (not merely a subclass).
\end{definition}

Boundedness is a definability condition, not isomorphism invariant in $\mathfrak{Class}$.  But bounded functions are closed under composition to they define a subcategory  $\mathfrak{BClass}$ which also has good categorical properties:

\begin{lemma}The projections \clarrow{\cprod{\mathcal{A}}{\mathcal{B}}}{\mathcal{A}} and  \clarrow{\cprod{\mathcal{A}}{\mathcal{B}}}{\mathcal{B}} from the canonical product of two classes are bounded, as are canonical class equalizers, and pullbacks.  So are the injections \clmarrow{\mathcal{A}}{\mathcal{A+B}} and \clmarrow{\mathcal{B}}{\mathcal{A+B}} into a canonical disjoint union.
\end{lemma}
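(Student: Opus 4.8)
The plan is to check the boundedness condition directly for each of the listed canonical maps. In every case the map is a \emph{concrete} set operation --- a projection, a pairing with a fixed constant, or an inclusion --- so the image of a subset and the graph of the restriction can be carved out by the union axiom followed by bounded separation in the language of \textsf{MAC}, with the input set as the only parameter. This is what keeps the argument inside the separation MTT does assume, and avoids both replacement and any reference to the abstract class-graph of the map. First I would record two reductions. A subclass inclusion $\mathcal{C}\subseteq^1\mathcal{A}$ is automatically bounded, since for $C\subseteq^0\mathcal{C}$ the image is $C$ itself (and $C\subseteq^0\mathcal{A}$), while its graph is the diagonal $\{\langle x,x\rangle\mid x\in C\}\subseteq^0\cprod{C}{C}$. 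And the restriction of a bounded function to a subclass of its domain is again bounded, since a subset of the subclass is already a subset of the full domain, so the required set image and set graph are supplied verbatim by the boundedness of the original function.

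For the product projections, given $P\subseteq^0\cprod{\mathcal{A}}{\mathcal{B}}$ the set $P$ is a set of Kuratowski pairs, so every coordinate occurring in $P$ lies in $\bigcup\bigcup P$, a set by two applications of union. Then
\[\pi_1(P)=\{\,x\in\bigcup\bigcup P\ \mid\ \exists y\in\bigcup\bigcup P\ \langle x,y\rangle\in P\,\}\]
is given by a bounded formula whose only parameter is the set $P$, hence is a set by bounded separation, and plainly $\pi_1(P)\subseteq^0\mathcal{A}$. The graph of $\pi_1|_P$ is cut out of $\cprod{P}{\pi_1(P)}$ by the bounded condition ``$\exists y\ p=\langle x,y\rangle$'', again a set. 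The projection $\pi_2$ is identical.

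The equalizer map is the inclusion of $\{\,x\in^1\mathcal{A}\mid f(x)=g(x)\,\}$ into $\mathcal{A}$, hence bounded by the first reduction. The canonical pullback projections $\clarrow{\cpull{\mathcal{A}}{\mathcal{C}}{\mathcal{B}}}{\mathcal{A}}$ and $\clarrow{\cpull{\mathcal{A}}{\mathcal{C}}{\mathcal{B}}}{\mathcal{B}}$ are the restrictions of the product projections to the subclass $\cpull{\mathcal{A}}{\mathcal{C}}{\mathcal{B}}\subseteq^1\cprod{\mathcal{A}}{\mathcal{B}}$, hence bounded by the second reduction together with the product case. For the coproduct injections, $\clmarrow{\mathcal{A}}{\mathcal{A+B}}$ sends $x\mapsto\langle x,0\rangle$, so the image of $A\subseteq^0\mathcal{A}$ is $\cprod{A}{\{0\}}$, a set, and its graph is the bounded-separation subset $\{\,\langle x,\langle x,0\rangle\rangle\mid x\in A\,\}$ of $\cprod{A}{(\cprod{A}{\{0\}})}$; the injection of $\mathcal{B}$ is symmetric with $1$ for $0$.

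The only point that needs watching, and the place the argument would break if one were careless, is that each separating formula stay bounded and free of class and collection parameters, so that it falls under \textsf{MAC}'s bounded separation rather than the unrestricted separation MTT pointedly declines to assume (the Remark after Definition~\ref{D:powerclass}). Exhibiting the explicit set bounds $\bigcup\bigcup P$, $\cprod{A}{\{0\}}$, and $\cprod{C}{C}$, and using only the concrete defining operation of each map rather than its class-graph, is exactly what secures this.
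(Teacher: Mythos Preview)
Your argument is correct and follows essentially the same route as the paper's: projections are bounded because a set of ordered pairs has a set of first (resp.\ second) components, equalizer inclusions are trivially bounded since a subset of a subclass of $\mathcal{A}$ is a subset of $\mathcal{A}$, and pullback projections reduce to the previous two cases (the paper phrases this as ``canonical pullbacks are equalizers of canonical products,'' which amounts to your ``restrictions of the product projections''). Your write-up is more explicit than the paper's in also verifying the set-graph condition and in treating the coproduct injections, both of which the paper's proof leaves tacit.
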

\begin{proof}The \textsf{MAC} axioms within MTT imply any set of ordered pairs has a set of first components.  The case of equalizers is trivial since $E\subseteq^0 \mathcal{E}\subseteq^1 \mathcal{A}$ implies $E\subseteq^0 \mathcal{A}$.  Canonical pullbacks are equalizers of canonical products.
\end{proof}

\begin{theorem}\label{T:cartesianfragment}
For any set $A$ and class $\mathcal{B}$ the set theoretic formula
  \[\exists B\subseteq^0\mathcal{B},\ \arrow{f}{A}{B}\]
defines the class $\mathcal{B}^A$ of all bounded functions from the class $\{x | x\in A\}^1$ to $\mathcal{B}$.
\end{theorem}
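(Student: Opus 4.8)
The plan is to reduce the boundedness condition, which quantifies over all subsets of the domain, to a single check that becomes available because the domain $\{x\,|\,x\in A\}^1$ comes from a set, and then to read off that the displayed formula names exactly the graphs that survive this check. Recall a class function $g$ from $\{x\,|\,x\in A\}^1$ to $\mathcal{B}$ is coded by its graph $\Gamma_g$, a functional subclass of $\cprod{A}{\mathcal{B}}$. First I would prove the key reduction: such a $g$ is bounded if and only if $\Gamma_g$ is a set, not merely a subclass. For the forward implication, since $A\subseteq^0\{x\,|\,x\in A\}^1$ we may instantiate the definition of bounded function at the subset $A$ itself; this yields a set image $B=g(A)\subseteq^0\mathcal{B}$ together with $\Gamma_g\subseteq\cprod{A}{B}$ a set. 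For the converse, if $\Gamma_g$ is already a set then for each $A'\subseteq^0\{x\,|\,x\in A\}^1$ the restricted graph---the pairs in $\Gamma_g$ whose first component lies in $A'$---is carved out of $\Gamma_g$ by bounded separation, and its set of second components is a set by the \textsf{MAC} fact invoked in the previous Lemma that any set of ordered pairs has a set of components; hence $g$ is bounded.

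Next I would match these set graphs with the objects named by the formula. A functional set $\Gamma_g\subseteq\cprod{A}{\mathcal{B}}$ with domain $A$ is exactly an ordinary total function \arrow{f}{A}{B} on taking $B=\mathrm{ran}(f)$, and $B\subseteq^0\mathcal{B}$ holds because every value already lies in $\mathcal{B}$; conversely every such $f$ arises this way. The graph of $f$ does not depend on which superset $B\subseteq^0\mathcal{B}$ of its range one names, so the quantifier $\exists B$ merely asserts boundedness and leaves $f$ itself as the object. Thus the bounded functions $\{x\,|\,x\in A\}^1\to\mathcal{B}$ are in bijection with the sets $f$ satisfying $\exists B\subseteq^0\mathcal{B},\ \arrow{f}{A}{B}$, and we may take this as the defining condition of $\mathcal{B}^A$.

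Finally I would note that the formula is set theoretic in the sense of Section~\ref{S:classes}: it quantifies only over the sets $B$ and $f$, carries $A$ as a set parameter and $\mathcal{B}$ as a class parameter, and its clauses $B\subseteq^0\mathcal{B}$ and ``$f$ is a total function $A\to B$'' are themselves set theoretic. By the standing principle that any set theoretic formula defines a class, it defines $\mathcal{B}^A$. The one delicate step is the reduction in the first paragraph, and I expect it to be the main point rather than a genuine obstacle: it succeeds precisely because $A$ is a set, so the single instance $A'=A$ of the boundedness condition governs all the others. In particular no replacement enters, since we verify boundedness one function at a time and never collect the images $g(A')$ across varying $g$.
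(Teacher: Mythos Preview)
Your proof is correct. The paper states this theorem without proof, evidently regarding it as immediate once the definitions are in place; your argument makes explicit the one point the paper suppresses, namely that when the domain class arises from a set $A$ the universally quantified boundedness condition collapses to the single instance $A'=A$, so a class function $\{x\mid x\in A\}^1\to\mathcal{B}$ is bounded exactly when its graph is a set. The remaining bookkeeping---matching such set graphs with the $f$ picked out by the displayed formula, and checking that the formula is set theoretic so that it defines a class in MTT---is handled correctly.
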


\begin{definition}
A class function \arrow{f}{\mathcal{C}}{\mathcal{B}} is \demph{locally small} if every subset $B\subseteq \mathcal{B}$ has a subset pre-image $f^{-1}(B)\subseteq^0 \mathcal{C}$ with the graph of the restriction a subset of $\cprod{f^{-1}(B)}{B}$.
 \end{definition}

 \begin{lemma} In $\mathfrak{BClass}$, all pullbacks of locally small \arrow{f}{\mathcal{C}}{\mathcal{B}} are locally small.
 \end{lemma}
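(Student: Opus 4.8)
The plan is to test local smallness of the pullback projection directly against its definition, exploiting that every map in $\mathfrak{BClass}$ is bounded. So let \arrow{g}{\mathcal{A}}{\mathcal{B}} be an arbitrary bounded class function, form the canonical pullback of $f$ along $g$, and consider its projection \arrow{\pi}{\cpull{\mathcal{A}}{\mathcal{B}}{\mathcal{C}}}{\mathcal{A}}, which is the pullback of $f$. The preceding lemma already makes $\pi$ bounded, so it is a genuine $\mathfrak{BClass}$ morphism; what remains is to show each of its subset fibers is a set. The decisive point is that boundedness of $g$, rather than any smallness of $g$, confines those fibers inside a product of two genuine sets.

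Concretely, I would fix a subset $A\subseteq^0\mathcal{A}$ and put $B=g(A)$. Boundedness of $g$ makes $B$ a subset of $\mathcal{B}$ with the graph of $g|_A$ inside $\cprod{A}{B}$. Local smallness of $f$ then yields a set preimage $C'=f^{-1}(B)\subseteq^0\mathcal{C}$ whose restricted graph lies in $\cprod{C'}{B}$. The key containment is
\[\pi^{-1}(A)\subseteq\cprod{A}{C'},\]
since a pair $\langle a,c\rangle$ in the pullback with $a\in A$ has $f(c)=g(a)\in B$, forcing $c\in f^{-1}(B)=C'$.

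With both $A$ and $C'$ known to be sets, the rest is bounded separation. The defining condition $g(a)=f(c)$ can be written with all quantifiers bounded by the two set graphs already in hand, so $\pi^{-1}(A)$ is separated out of $\cprod{A}{C'}$ as a set, i.e.\ $\pi^{-1}(A)\subseteq^0\cpull{\mathcal{A}}{\mathcal{B}}{\mathcal{C}}$; and the graph of $\pi$ restricted to that preimage is plainly a subset of $\cprod{\pi^{-1}(A)}{A}$. That is exactly the local smallness of $\pi$.

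The one thing to watch is precisely this reliance on boundedness of the base-change map $g$ rather than on local smallness: it is the $\mathfrak{BClass}$ hypothesis that supplies the set $B=g(A)$, which is what turns the otherwise class-sized condition $g(a)=f(c)$ into a separation inside a product of sets. I would double-check that every quantifier in that separation ranges only over the sets $A$, $B$, $C'$ and their set graphs, so the argument stays within \textsf{MAC}'s bounded separation and uses no replacement.
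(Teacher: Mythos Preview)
Your argument is correct and matches the paper's proof essentially step for step: the paper also fixes $A\subseteq^0\mathcal{A}$, takes $B=g(A)$ by boundedness of $g$ and $C=f^{-1}(B)$ by local smallness of $f$, and then identifies $\pi^{-1}(A)$ with the set pullback $\cpull{A}{B}{C}$ (which is your $\pi^{-1}(A)\subseteq\cprod{A}{C'}$ cut out by $g(a)=f(c)$). Your version is in fact more explicit than the paper's, which simply records the inclusion of the set pullback square inside the class pullback square and leaves the bounded-separation and graph checks to the reader.
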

 \begin{proof}For $A\subseteq^0 \mathcal{A}$ take sets $B=g(A)$, $C=f^{-1}(B)$ and set functions $g|_A$, $f|_C$:
  \[\xymatrix@R=1ex{ \cpull{A}{B}{C}  \ar[dd] \ar[r] & C \ar[dd]^>>>>>>{f|_C}  && \mathcal{\cpull{A}{B}{C}} \ar[r] \ar[dd] & \mathcal{C} \ar[dd]^>>>>>>f\\  && \subseteq^0  \\
                A \ar[r]_{g|_A} & B   &&  \mathcal{A} \ar[r]_>>>>>>>g & \mathcal{B}}\]
 \vspace{-6ex}

 \hfill\qedhere
  \end{proof}

Local smallness resembles the idea of $\alpha$-small in \citeyearpar{KapuLenFanuVoev} but is not just a size bound on fibers.  It is a definability condition on sets of fibers.  And it gives a crucial fragment of local cartesian closedness for bounded class functions:

\begin{theorem} In $\mathfrak{BClass}$, pullback $\mathcal{F}^*$ along \arrow{\mathcal{F}}{\mathcal{B}}{\mathcal{A}} has a left adjoint $\Sigma_{\mathcal{F}}$.  And if $\mathcal{F}$ is  locally small there is a right adjoint $\Pi_{\mathcal{F}}$.
 \end{theorem}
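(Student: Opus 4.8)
The plan is to treat the two adjoints separately, building $\Sigma_{\mathcal{F}}$ by bare composition and $\Pi_{\mathcal{F}}$ by a fibrewise section-space cut out by a set theoretic formula.

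For the left adjoint I would set $\Sigma_{\mathcal{F}}(g)=\mathcal{F}g$ for any object \arrow{g}{\mathcal{C}}{\mathcal{B}} of $\mathfrak{BClass}/\mathcal{B}$, sending a morphism over $\mathcal{B}$ to the same function regarded as a morphism over $\mathcal{A}$. This lands in $\mathfrak{BClass}/\mathcal{A}$ because bounded class functions are closed under composition. The required bijection $\mathrm{Hom}_{/\mathcal{A}}(\Sigma_{\mathcal{F}}g,h)\cong\mathrm{Hom}_{/\mathcal{B}}(g,\mathcal{F}^*h)$ is then exactly the universal property of the pullback $\mathcal{F}^*\mathcal{D}=\cpull{\mathcal{B}}{\mathcal{A}}{\mathcal{D}}$, which is available in $\mathfrak{BClass}$ because canonical pullbacks there are bounded and computed as in $\mathfrak{Class}$. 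I expect this half to be entirely routine: it uses no replacement and no local smallness, only the pullback universal property and closure under composition.

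The right adjoint is where the work lies. Given \arrow{g}{\mathcal{C}}{\mathcal{B}}, I would define $\Pi_{\mathcal{F}}(\mathcal{C})$ to be the class of pairs $\langle a,\sigma\rangle$ where $a\in^1\mathcal{A}$ and $\sigma$ is a bounded function whose domain is the fibre $\mathcal{F}^{-1}(a)=\{b\in^1\mathcal{B}\mid \mathcal{F}(b)=a\}$ and which satisfies the section condition $g(\sigma(b))=b$ for every $b$ in that fibre; the structure map $\Pi_{\mathcal{F}}(g)$ sends $\langle a,\sigma\rangle$ to $a$. The crucial point is that this is a genuine class given by a set theoretic formula. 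Local smallness of $\mathcal{F}$ guarantees that each fibre $\mathcal{F}^{-1}(a)$ is a set; Theorem~\ref{T:cartesianfragment} then supplies the class $\mathcal{C}^{\mathcal{F}^{-1}(a)}$ of bounded functions from that set into $\mathcal{C}$, each of which is itself a set, and the section condition is a bounded formula carving out a subclass. The structure map is bounded because any set of such pairs has a set of first components.

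For the adjunction $\mathrm{Hom}_{/\mathcal{B}}(\mathcal{F}^*h,g)\cong\mathrm{Hom}_{/\mathcal{A}}(h,\Pi_{\mathcal{F}}g)$ I would use the usual transpose: a morphism \arrow{\phi}{\cpull{\mathcal{B}}{\mathcal{A}}{\mathcal{D}}}{\mathcal{C}} over $\mathcal{B}$ corresponds to \arrow{\psi}{\mathcal{D}}{\Pi_{\mathcal{F}}(\mathcal{C})}, $\psi(d)=\langle h(d),\sigma_d\rangle$ with $\sigma_d(b)=\phi(\langle b,d\rangle)$ for $b\in \mathcal{F}^{-1}(h(d))$, and conversely. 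The main obstacle, and the place where both local smallness and boundedness must be handled with care under bounded separation and without replacement, is verifying that the transposed maps are themselves bounded class functions. Each $\sigma_d$ is bounded because it is the restriction of the bounded $\phi$ to the set of pullback-pairs with second coordinate $d$, so its image is a set; and $\psi$ is bounded because for a subset $D'\subseteq^0\mathcal{D}$ the domains assemble into one set $\mathcal{F}^{-1}(h(D'))$ (local smallness of $\mathcal{F}$ applied to the set $h(D')$), the values all lie in a single set, namely the image under the bounded $\phi$ of a set, so the graphs $\sigma_d$ all sit inside one fixed set and $\{\sigma_d\mid d\in^0 D'\}$ is cut out of a power set by bounded separation. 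Checking naturality and that the two transposes are mutually inverse is then formal.
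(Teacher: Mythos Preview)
Your proposal is correct and is precisely the ``straightforward verification'' the paper invokes as its entire proof; the paper gives no details beyond those two words, and you have supplied the standard construction of $\Sigma_{\mathcal{F}}$ by composition and of $\Pi_{\mathcal{F}}$ by fibrewise sections, together with the boundedness checks that make the argument go through in $\mathfrak{BClass}$ under bounded separation.
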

 \begin{proof}Straightforward verification.
 \end{proof}

\begin{corollary}  In $\mathfrak{BClass}$, every locally small class function \arrow{\mathcal{F}}{\mathcal{B}}{\mathcal{A}} has a polynomial functor $\mathcal{P_F}$ (compare  \citet[Def.~2.1]{vandenBergMoerdijk}).
  \[ \xymatrix@C=1ex{ \mathcal{P_F}& = &  \mathfrak{BClass} \ar[rrr]^<<<<<{\cprod{\_}\mathcal{B}} &&&  \mathfrak{BClass}/\mathcal{B} \ar[rrr]^<<<<<{\Pi_{\mathcal{F}}} &&&
                 \mathfrak{BClass}/\mathcal{A} \ar[rrr]^<<<<<{\Sigma_{\mathcal{A}}}  &&& \mathfrak{BClass} }\]
\end{corollary}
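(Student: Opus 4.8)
The plan is to read off $\mathcal{P_F}$ as the displayed composite and to check only that each of its three factors is a genuine functor among the relevant slices of $\mathfrak{BClass}$; the corollary then follows formally, since bounded functions compose. So there is little to prove beyond assembling the preceding Lemma and Theorem into a single endofunctor.

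First I would dispatch the two outer factors, which need no hypothesis on $\mathcal{F}$. The functor $\cprod{\_}{\mathcal{B}}\colon\mathfrak{BClass}\to\mathfrak{BClass}/\mathcal{B}$ sends a class $\mathcal{X}$ to the second projection $\cprod{\mathcal{X}}{\mathcal{B}}\to\mathcal{B}$; by the Lemma that canonical projections are bounded this lands in $\mathfrak{BClass}/\mathcal{B}$, and it is functorial because the canonical product is. It is precisely pullback along the bounded map $\mathcal{B}\to\mathbf{1}$ to the terminal class. Dually, $\Sigma_{\mathcal{A}}\colon\mathfrak{BClass}/\mathcal{A}\to\mathfrak{BClass}$ is the domain functor, sending $(\mathcal{Y}\to\mathcal{A})$ to $\mathcal{Y}$ and a slice map to its underlying bounded function; it manifestly exists and coincides with $\Sigma$ along $\mathcal{A}\to\mathbf{1}$, the left-adjoint half of the preceding Theorem applied to the bounded map $\mathcal{A}\to\mathbf{1}$.

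The only factor that uses the hypothesis is the middle one: since $\mathcal{F}$ is locally small, the preceding Theorem supplies the right adjoint $\Pi_{\mathcal{F}}\colon\mathfrak{BClass}/\mathcal{B}\to\mathfrak{BClass}/\mathcal{A}$ to pullback along $\mathcal{F}$. Composing the three factors yields the endofunctor $\mathcal{P_F}=\Sigma_{\mathcal{A}}\circ\Pi_{\mathcal{F}}\circ(\cprod{\_}{\mathcal{B}})$ of $\mathfrak{BClass}$, which is the polynomial functor of \citet[Def.~2.1]{vandenBergMoerdijk}.

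The point that needs care, and where the MTT setting does the real work, is not the formal composition but the verification, already carried out in the preceding Lemma and Theorem, that every stage stays inside $\mathfrak{BClass}$: projections and pullbacks are bounded, local smallness is preserved under the pullback feeding $\Pi_{\mathcal{F}}$, and $\Pi_{\mathcal{F}}$ returns a bounded class function over $\mathcal{A}$ rather than a mere subclass. I expect the main obstacle, were these results not already in hand, to be exactly this last item: showing the right adjoint produces a \emph{bounded} function, since that is what guarantees $\mathcal{P_F}$ is defined by a set theoretic formula and is a legitimate construction of MTT rather than an external category-theoretic abstraction.
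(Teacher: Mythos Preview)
Your proposal is correct and matches the paper's approach: the paper states this as an immediate corollary with no separate proof, relying entirely on the preceding Lemma and Theorem to supply the three factors. Your write-up simply makes explicit the checks the paper leaves implicit, and identifies correctly that the only place the local-smallness hypothesis enters is in invoking $\Pi_{\mathcal{F}}$.
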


A W-type $\mathrm{W}(\mathcal{F})$ for locally small class function \arrow{\mathcal{F}}{\mathcal{B}}{\mathcal{A}} is an initial algebra for the functor $\mathcal{P_F}$.  Local smallness in this corollary means the signature for an algebra in MTT can have a proper class of operators, so long as each operator has a \emph{set} arity.  Each locally small functor has a W-type (unique up to isomorphism) which can be constructed as the class of all well-founded trees as it is in sets.  The initial algebra property is provable by well-founded induction on each tree, since the trees are sets.  Further, there is a class $\cprod{\mathcal{C}}{\mathbb{N}}$ where the slice $\mathcal{C}_n$ over any $n\in\mathbb{N}$ is the $n$-th iterate $\mathcal{P}_{\mathcal{F}}^n(\mathcal{\emptyset})$.  When all operators of the algebra are finitary, or in other words every fiber of  \arrow{\mathcal{F}}{\mathcal{B}}{\mathcal{A}} is finite, then by K\"onig's lemma all the trees in $\mathrm{W}(\mathcal{F})$ are finite, and MTT proves  $\mathrm{W}(\mathcal{F})$ is the colimit of these classes.

\section{Simplicial classes}\label{S:simplicialclasses}
\subsection{Quillen model structure on simplicial classes}
\begin{definition} \quad
\begin{enumerate}
   \item A \demph{simplicial class} is a class presheaf $\langle\mathcal{X},\gamma_{\mathcal{X}}\rangle$ on $\Delta$.  So \arrow{\mathcal{X}}{\mathcal{X}_0}{\Delta_0} and \arrow{\gamma_{\mathcal{X}}}{\cprod{\Delta_1}{\mathcal{X}_0}}{{\mathcal{X}_0}} meet the presheaf conditions.
   \item A \demph{simplicial class map} \arrow{f}{\mathcal{Y}}{\mathcal{X}} is a bounded natural transformation, a bounded function \arrow{f}{\mathcal{Y}_0}{\mathcal{X}_0} meeting the obvious conditions.
\end{enumerate}
 \end{definition}

So MTT proves there is a collection category $\mathrm{s}\mathscr{C}\mathrm{lass}$ of all simplicial classes and simplicial maps. It clearly has initial and terminal objects, products, and coproducts, and pullbacks.  The notions of a simplicial class map \arrow{f}{\mathcal{X}}{\mathcal{Y}} being a Kan fibration, or acyclic, are set theoretic, quantifying only over elements of $\mathcal{X}_0$ and finite subsets of $\mathcal{Y}_0$.  So they can be used to define classes, and there is a collection of all (acyclic) Kan fibrations in $\mathrm{s}\mathscr{C}\mathrm{lass}$.

\begin{theorem}\label{T:classfibration} \emph{(Provably, in MTT)} Any simplicial class map \arrow{f}{\mathcal{X}}{\mathcal{Y}} can be factored into an acyclic monic \marrow{j}{\mathcal{X}}{\mathcal{Z}} and a fibration \arrow{p}{\mathcal{Z}}{\mathcal{Y}}; and into a monic followed by an acyclic fibration.
\end{theorem}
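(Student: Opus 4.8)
The plan is to mirror the proof of Theorem~\ref{T:fibration} step for step, replacing the single bounding \emph{set} that bounded separation supplied there with a single bounding \emph{class}: the W-type of a suitable locally small class signature. In place of bounded separation we use set theoretic formulas carrying $\mathcal{X}$, $\mathcal{Y}$, and that W-type as class parameters. Since the excerpt already records that being a Kan fibration and being acyclic are set theoretic conditions on a simplicial class map, the verifications at the end are exactly the set theoretic checks used for simplicial sets, now read with class parameters.

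First I would set up the signature. As in the set case it carries one constant for each simplex of $\mathcal{X}$, that is one $0$-ary operator for each element of the class $\mathcal{X}_0$, together with two $n$-ary operators $\phi_n$ and $\kappa_n$ for each $n\in\mathbb{N}$. This is now a proper class of operators, but every operator has finite, hence set, arity, so the associated class signature function \arrow{\mathcal{F}}{\mathcal{B}}{\mathcal{A}} is locally small: a set $A$ of operators has only finitely many argument places over each of its members, so $\mathcal{F}^{-1}(A)$ is a set whose graph lies in $\cprod{\mathcal{F}^{-1}(A)}{A}$. By the construction of class W-types for locally small functions above, $\mathcal{F}$ then has a W-type $\mathrm{W}(\mathcal{F})$, and since all operators are finitary König's lemma makes every tree finite, so $\mathrm{W}(\mathcal{F})$ is the colimit of the finite iterates $\mathcal{P}_{\mathcal{F}}^{n}(\emptyset)$. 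This class is the bounding class that supplies potential simplices for every stage of the construction before any stage is built.

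Next I would run Quillen's small object argument exactly as in Theorem~\ref{T:fibration}. Identify each constant with the corresponding simplex of $\mathcal{X}$; whenever $\sigma_1,\dots,\sigma_n$ is a horn in $\mathcal{X}$ read $\phi(\sigma_1\dots\sigma_n)$ as the filling simplex in $\mathcal{Z}^0$ and $\kappa(\sigma_1\dots\sigma_n)$ as its last face; and define the series $\mathcal{Z}^0,\mathcal{Z}^1,\mathcal{Z}^2,\dots$ inductively by horn filling, every quantifier bounded by $\mathrm{W}(\mathcal{F})$. Because $\mathrm{W}(\mathcal{F})$ enters only as a class parameter, each inductive clause is a set theoretic formula, so each $\mathcal{Z}^i$ and their union $\mathcal{Z}$ is a genuine class. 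The inclusion \marrow{j}{\mathcal{X}}{\mathcal{Z}} is then an acyclic monic and the projection \arrow{p}{\mathcal{Z}}{\mathcal{Y}} a fibration by the same set theoretic verifications as for simplicial sets. The second factorization, a monic followed by an acyclic fibration, is produced by the identical argument with operators filling boundaries $\partial\Delta[n]$ in place of horns, just as for the two cases of Theorem~\ref{T:fibration}.

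The main obstacle I anticipate is not the homotopy theory, which is inherited from the set case, but the two bookkeeping points that make the construction legitimate in MTT. The first is local smallness of the signature: without it there is no W-type and the strategy of a single bounding class collapses, but finiteness of the arities secures it. The second is boundedness, since the definition of a simplicial class map requires \arrow{p}{\mathcal{Z}}{\mathcal{Y}} to be a bounded natural transformation; here I would check that, because every simplex of $\mathcal{Z}$ is a finite tree built over finitely many simplices of $\mathcal{X}$ and $f$ is itself bounded, any subset of $\mathcal{Z}_0$ has a set image in $\mathcal{Y}_0$ with set graph. Granting these two points, $j$ and $p$ are simplicial class maps of the required kind and the factorization is complete.
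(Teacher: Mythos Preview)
Your proposal is correct and follows essentially the same approach as the paper: adapt the W-type construction of Theorem~\ref{T:fibration} from sets to $\mathfrak{BClass}$, noting that all operators are finitary so the class signature is locally small and the needed W-type exists. The paper's own proof is a two-line remark to this effect; your version simply fills in the bookkeeping (local smallness, boundedness of $p$) that the paper leaves implicit.
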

\begin{proof} Adapt the W-type construction in Theorem~\ref{T:fibration} from sets to  $\mathfrak{BClass}$.  All the operators are finitary.
\end{proof}

\subsection{Homotopy Type Theory in  $\mathrm{s}\mathscr{C}\mathrm{lass}$}
What \cite{KapuLenFanuVoev} say on the constructors $\Sigma\mhyphen,\mathbf{1},\mathbf{0},+$ in $\mathrm{s}\mathscr{S}\mathrm{et}$ applies virtually unchanged in $\mathrm{s}\mathscr{C}\mathrm{lass}$, now that we know the relevant classes are well defined.  They refer to \citet{AwodeyWarren} for the identity type constructor.  Theorem~3.1 and Corollary~3.2 of that paper show how to define $\mathrm{Id}\mhyphen$ for $\mathrm{s}\mathscr{C}\mathrm{lass}$, because our Theorem~\ref{T:cartesianfragment} implies each simplicial class $\mathcal{A}$ has exponential $\mathcal{A^I}$ to serve as loop space where $\mathcal{I}=\Delta[1]$ is the usual simplicial  interval.

In general there is no class of all functions from one class to another.  The constructor $\Pi\mhyphen$ is interpretable for locally small dependent types, but not for arbitrary dependent types.

\section{One univalent universe in MTT}\label{S:finiteorder}

Now we can eliminate the inaccessible cardinal $\alpha$ from the construction of a univalent universe $\mathcal{U}_0$ by \citet[Section~2.1]{KapuLenFanuVoev}. Where they speak of sets with cardinality $<\alpha$ we speak of sets.  Where they speak of $\alpha$-small functions we speak of locally small.  For their non-small sets and functions we use classes and class functions.  We can make pullback of dependent types strictly functional by taking one (globally) selected representative of each.  And where they use isomorphism classes of $\alpha$-small Kan fibrations, we can use (globally) selected representative of each.  Apart from those technicalities the construction can follow theirs.

This does \emph{not} change their use of the category of arbitrarily well ordered  $\alpha$-small Kan fibrations and order preserving maps.  Those well orderings serve to eliminate non-identity automorphisms of extensions, so as to give a strictly universal family.

\bibliographystyle{apalike}

\end{document}